\newtheorem{theorem}{Theorem}[section]
\newtheorem{lemma}[theorem]{Lemma}
\newtheorem{proposition}[theorem]{Proposition}
\theoremstyle{definition}
\newtheorem{example}[theorem]{Example}
\theoremstyle{remark}
\newtheorem{remark}[theorem]{Remark}
\numberwithin{equation}{section}
\begin{document}
\setlength{\baselineskip}{1.2\baselineskip}
\title [$C^2$ interior a priori estimate ]
{The interior $C^2$ estimate for prescribed Gauss curvature equation in dimension two}
\author{Chuanqiang Chen}
\address{Department of Applied Mathematics,\\
         Zhejiang University of Technology\\
        Hangzhou, 310023, Zhejiang Province, CHINA}
\email{cqchen@mail.ustc.edu.cn}
\author{Fei Han}
\address{School of Mathematics Sciences\\
         Xinjiang Normal University\\
         Urumqi, 830054, Xinjiang Uygur Autonomous Region, CHINA}
\email{klhanqingfei@126.com}
\author{Qianzhong Ou}
\address{Department of Mathematics\\
          Hezhou University\\
         Hezhou, 542800, Guangxi Province, CHINA}
\email{ouqzh@163.com}
\thanks{2010 Mathematics Subject Classification: 35J96, 35B45, 35B65}
\thanks{Keywords: interior $C^2$ estimate, Monge-Amp\`{e}re equation, prescribed Gauss curvature equation}
\thanks{Research of the first author is supported by the National Natural Science Foundation of China (NO. 11301497 and NO. 11471188). Research of the second author is supported by the National Natural Science Foundation of China (NO. 11161048). Research of the third author is supported by NSFC No. 11061013 and by Guangxi Science Foundation (2014GXNSFAA118028) and Guangxi Education Department Key Laboratory of Symbolic Computation and Engineering Data Processing.}
\maketitle

\begin{abstract}
In this paper, we introduce a new auxiliary function, and establish the interior $C^2$ estimate for prescribed Gauss curvature equation in dimension two.
\end{abstract}

\section{Introduction}

Given a positive function $f(x) \in C^2(\Omega)$ with $\Omega \subset \mathbb{R}^n$, to find a convex solution $u$, such that the Gauss curvature of the graph $(x,u(x))$ is $f(x)$, that is
\begin{equation}\label{1.1}
\frac{\det \nabla^2 u}{(1+ |\nabla u|^2)^{\frac{n+2}{2}}}=f(x), \quad \text{ in } \Omega.
\end{equation}
This is the classical prescribed Gauss curvature problem, and \eqref{1.1} is called prescribed Gauss curvature equation.

The a priori estimates are very important for fully nonlinear elliptic equations, especially the $C^2$ estimate. The interior $C^2$ estimate of $\sigma_2$ Hessian equation
\begin{align}\label{1.2*}
\sigma_2(\nabla^2 u) =f(x), \quad \text{ in }  B_R(0) \subset \mathbb{R}^n
 \end{align}
 in higher dimensions is a longstanding problem, where
$\sigma_2(\nabla^2u)=\sigma_2(\lambda(\nabla^2 u) )= \sum\limits_{1 \le i_1 < i_2 \leq n}\lambda_{i_1}\lambda_{i_2}$, $\lambda(\nabla^2 u)=(\lambda_1, \cdots, \lambda_n)$ are the eigenvalues of $\nabla^2 u$, and $f >0$. For $n=2$, \eqref{1.2*} is the Monge-Amp\`{e}re equation, and Heinz \cite{H59} obtained the estimate by the convex hypersurface geometry method (see \cite{CHO15} for an elementary analytic proof). For Monge-Amp\`{e}re equations with dimension $n \geq 3$, Pogorelov \cite{P78} constructed his famous counterexample, namely irregular solutions to Monge-Amp\`{e}re equations. For $n =3$ and $f \equiv 1$, \eqref{1.2*} can be reduced to a special Lagrangian equation after a Lewy transformation, and Warren-Yuan obtained the corresponding interior $C^2$ estimate in the celebrated paper \cite{WY09}. Moreover, the problem is still open for general $f$ with $n \geq 4$ and nonconstant $f$ with $n =3$. Also Urbas \cite{U90} generalized the counterexample for $\sigma_k$ Hessian equations with $k \geq 3$.

Moreover, Pogorelov type estimates for the Monge-Amp\`{e}re equations and $\sigma_k$ Hessian equation ($k \geq 2$) were derived by Pogorelov \cite{P78} and Chou-Wang \cite{CW01} respectively, and see \cite{GRW14} and \cite{LRW15} for some generalizations.

In this paper, we consider the convex solution of  prescribed Gauss curvature equation in dimension $n=2$ as follows
\begin{equation}\label{1.2}
\frac{\det \nabla^2 u}{(1+ |\nabla u|^2)^{2}}=f(x), \quad \text{ in }  B_R(0) \subset \mathbb{R}^2,
\end{equation}
and establish the interior $C^2$ estimate as follows
\begin{theorem}\label{th1.1}
Suppose $u \in C^4(B_R(0))$ be a convex solution of  prescribed Gauss curvature equation \eqref{1.2} in dimension $n=2$, where $0 < m \leq f \leq M$ in $B_R(0)$. Then
\begin{align}\label{1.3}
|\nabla^2 u(0)| \leq C(m, M, R, \sup |\nabla f|, \sup |\nabla^2 f|, \sup |\nabla u|),
\end{align}
where $C$ is a positive constant depending only on $m$, $M$, $R$, $\sup |\nabla f|$, $\sup |\nabla^2 f|$, and $\sup |\nabla u|$.
\end{theorem}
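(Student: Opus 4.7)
\emph{Sketch of proof.}
The plan is to run a maximum principle argument on an auxiliary function built from $\rho(x)^\beta\,\log u_{\xi\xi}\,\Psi(|\nabla u|^2)$, where $\rho(x)=R^2-|x|^2$ is a cutoff, $\xi\in S^1$ is a test direction, $\beta>0$ is a constant to be chosen, and $\Psi$ is an exponential-type function of the gradient designed specifically to offset the $(1+|\nabla u|^2)^2$ factor on the right-hand side of \eqref{1.2}. This gradient-dependent weight is the ``new auxiliary function'' promised in the abstract; up to its presence, the strategy parallels Heinz's proof for pure Monge-Amp\`ere in two dimensions (see \cite{CHO15}).

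To carry this out, I would set $\Phi(x,\xi)=\rho(x)^\beta\,\log u_{\xi\xi}\,\Psi(|\nabla u|^2)$ and pick an interior maximum $(x_0,\xi_0)$. Rotating so that $\xi_0=e_1$ and $\nabla^2u(x_0)$ is diagonal with $u_{11}(x_0)\ge u_{22}(x_0)>0$, I may assume $u_{11}(x_0)$ is as large as one likes and work with $\log\Phi$. The vanishing of $(\log\Phi)_i$ at $x_0$ furnishes algebraic identities expressing $u_{11,i}/u_{11}$ in terms of $\rho_i/\rho$, $\Psi'/\Psi$ and $u_iu_{ii}$; these are substituted into $L\log\Phi$, where $L=u^{ij}\partial_i\partial_j=\tfrac{1}{u_{11}}\partial_1^2+\tfrac{1}{u_{22}}\partial_2^2$ is the linearized operator at $x_0$.

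The key structural input comes from differentiating $\log\det\nabla^2u=\log f+2\log(1+|\nabla u|^2)$. The first differentiation, evaluated at $x_0$, supplies
\begin{equation*}
\frac{u_{11,k}}{u_{11}}+\frac{u_{22,k}}{u_{22}}=(\log f)_k+\frac{4\sum_\ell u_\ell u_{\ell k}}{1+|\nabla u|^2},
\end{equation*}
which in dimension two lets me eliminate the ``bad'' third derivative $u_{22,1}$ in favour of $u_{11,1}$ plus quantities controlled by $\sup|\nabla f|$ and $\sup|\nabla u|$. The second differentiation produces an expression for $L(\log u_{11})$ that contains the good negative term $-\sum_i u_{11,i}^2/(u_{11}^2u_{ii})$, a Hessian-of-$f$ contribution bounded by $\sup|\nabla^2f|$, and extra gradient corrections from $2\log(1+|\nabla u|^2)$ controlled by $\sup|\nabla u|$.

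Plugging all of this into $L\log\Phi\le0$ yields at $x_0$ a quadratic inequality in $u_{11,1}/u_{11}$ and $u_{11,2}/u_{11}$ with bounded inhomogeneity. I would then choose $\beta$ and the exponent inside $\Psi$ so that the resulting quadratic form is strictly negative definite while the inhomogeneity remains $O(1)$; this delivers $\rho(x_0)^\beta\log u_{11}(x_0)\le C$ and hence \eqref{1.3} upon evaluating at $x=0$. I expect the principal obstacle to be exactly this orchestration of constants: the exponent inside $\Psi$ must be large enough to absorb the positive $4\log(1+|\nabla u|^2)$ contribution forced by the Gauss-curvature factor, yet small enough that the extra cross terms it introduces through $\Psi'/\Psi$ stay dominated by the good negative third-order term $-\sum_i u_{11,i}^2/(u_{11}^2u_{ii})$. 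Balancing these competing constraints, rather than any single calculation, is where the new auxiliary function does its essential work.
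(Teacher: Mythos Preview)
Your outline has a genuine gap: the simple radial cutoff $\rho(x)=R^2-|x|^2$ does not close the argument, and no choice of $\beta$ and of the exponent in $\Psi$ will rescue it. At the interior maximum (after diagonalizing $\nabla^2u(x_0)$ with $\lambda_1\ge\lambda_2$) the dangerous second-order cutoff contribution is
\[
\lambda_1\,\frac{\rho_{22}}{\rho}\;=\;-\,\frac{2\lambda_1}{\rho},
\]
and this blows up whenever $x_0$ approaches $\partial B_R$. The only two positive terms available to absorb it are the gradient-weight term, of size $\tfrac{g'}{g}f\lambda_1\sim C\lambda_1$ (independent of $\rho$), and the first-order cutoff term $(\tfrac{\beta^2}{2}-\beta)\lambda_1\rho_2^2/\rho^2=(\tfrac{\beta^2}{2}-\beta)\lambda_1\cdot 4x_2^2/\rho^2$. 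The first cannot dominate $2\lambda_1/\rho$ when $\rho$ is small, and the second vanishes when the maximum falls at a point with $x_2=0$ but $|x_1|\uparrow R$. So the inequality $L\log\Phi\le 0$ does not force $\rho^\beta u_{11}$ to be bounded; your ``orchestration of constants'' cannot be completed because the obstruction is structural, not numerical. This is exactly why the pure interior $C^2$ estimate for $2$D Monge--Amp\`ere is nontrivial and why Pogorelov's estimate needs a boundary anchor.

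The paper's actual innovation (imported from \cite{CHO15}) is not the gradient weight $g(\tfrac12|\nabla u|^2)=e^{c_0|\nabla u|^2/(2r^2)}$---that part is standard---but the \emph{anisotropic} cutoff
\[
\eta(x)\;=\;\bigl(r^2-|x|^2+\langle x,\tau(x)\rangle^2\bigr)\bigl(r^2-\langle x,\tau(x)\rangle^2\bigr),
\]
where $\tau(x)$ is the unit eigenvector of $\nabla^2u(x)$ for the largest eigenvalue. In the eigenframe at $x_0$ this becomes $\eta=(r^2-x_2^2)(r^2-x_1^2)$, so the bad term is $\lambda_1\eta_{22}/\eta\approx -2\lambda_1/(r^2-x_2^2)$, which blows up \emph{only} as $|x_2|\to r$. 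In that regime $\eta_2/\eta\approx -2x_2/(r^2-x_2^2)$ is also large, and the good term $(\tfrac{\beta^2}{2}-\beta)\lambda_1(\eta_2/\eta)^2$ (with $\beta=4$) swallows it; when $x_2^2\le r^2/2$ the bad term is uniformly $\ge -\tfrac{16}{r^2}\lambda_1$ and is absorbed by $\tfrac{g'}{g}f\lambda_1$. The price is that $\eta$ now depends on $\nabla^2u$ through $\tau$, so one must control first and second derivatives of the eigenvector field; this is handled in the paper via the perturbation formulas of Proposition~\ref{prop2.1} and the computations in Lemma~\ref{lem3.1}. Your sketch misses this entire mechanism.
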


\begin{remark}
Heinz established an interior $C^2$ estimate for general Monge-Amp\`{e}re equation with general conditions in \cite{H59}, and the proof depends on the strict convexity of solutions and the geometry of convex hypersurface in dimension two. In this paper, our proof, which is based on a suitable choice of auxiliary functions, is elementary and avoids geometric computations on the graph of solutions. This technique is from \cite{CHO15}.
\end{remark}

\begin{remark}
By Trudinger's gradient estimates of Hessian equations in \cite{T97} or a gradient estimate of convex function, that is
\begin{align*}
\mathop {\sup }\limits_{B_{\frac{R}{2}}(0)} |\nabla u| \leq \frac{ 2 \mathop { osc }\limits_{B_{R}(0)} u}{R},
\end{align*}
we can bound $|\nabla^2 u(0)|$ in terms of $u$.
\end{remark}

The rest of the paper is organized as follows. In Section 2, we give the calculations of the derivatives of eigenvalues and eigenvectors with respect to the matrix. In Section 3, we introduce a new auxiliary function, and prove Theorem \ref{th1.1}.

\section{Derivatives of eigenvalues and eigenvectors}

In this section, we give the calculations of the derivatives of eigenvalues and eigenvectors with respect to the matrix. We think the following result is known for many people, for example see \cite{A07} for a similar result. For completeness, we give the result and a detailed proof.

\begin{proposition}\label{prop2.1}
Let $W= \{W_{ij}\}$ is an $n \times n$ symmetric matrix and $ \lambda(W)= (\lambda_1,\lambda _2, \cdots ,\lambda _{n})$ are the eigenvalues
of the symmetric matrix $W$, and the corresponding continuous eigenvector field is $\tau^i=({\tau^i}_{,1}, \cdots, {\tau^i}_{,n}) \in \mathbb{S}^{n-1}$. Suppose that $W= \{W_{ij}\}$ is diagonal, $\lambda_i= W_{ii}$ and the corresponding eigenvector $\tau^i=(0, \cdots ,0,\mathop 1\limits_{i^{th} } ,0, \cdots ,0) \in \mathbb{S}^{n-1}$ at the diagonal matrix $W$. If $\lambda_k$ is distinct with other eigenvalues, then we have at the diagonal matrix $W$
\begin{align}
\label{2.1}&\frac{{\partial {\tau^k} _{,k} }}{{\partial W_{pq} }} = 0, ~\forall ~p, q; \quad \frac{{\partial {\tau^k} _{,i} }}{{\partial W_{ik} }} = \frac{1}{\lambda_k - \lambda_i}, \quad i \ne k; \quad \frac{{\partial {\tau^k}_{,i} }}{{\partial W_{pq} }} = 0, \text{  otherwise}. &\\
\label{2.2}& \frac{{\partial ^2 {\tau^k}_{,k} }}
{{\partial W_{pk} \partial W_{pk} }} =  - \frac{1} {{(\lambda _k  - \lambda _p )^2 }}, \quad p \ne k;\\
\label{2.3}&\frac{{\partial ^2 {\tau^k} _{,i} }} {{\partial W_{ik} \partial W_{ii} }} = \frac{1}
{{(\lambda _k  - \lambda _i )^2 }}, ~i \ne k; \quad \frac{{\partial ^2 {\tau^k}_{,i} }}
{{\partial W_{ik} \partial W_{kk} }} =  - \frac{1}{{(\lambda _k  - \lambda _i )^2 }}, \quad i \ne k;\\
\label{2.4}& \frac{{\partial ^2 {\tau^k}_{,i} }}{{\partial W_{iq} \partial W_{qk} }} = \frac{1}
{{\lambda _k  - \lambda _i }}\frac{1}{{\lambda _k  - \lambda _q }}, \quad i \ne k,i \ne q,q \ne k;\\
\label{2.5}& \frac{{\partial ^2 {\tau^k}_{,i} }} {{\partial W_{pq} \partial W_{rs} }} = 0, \text{ otherwise}.
\end{align}
\end{proposition}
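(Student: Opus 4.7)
I would implicitly differentiate the defining relations
\begin{equation*}
W \tau^k = \lambda_k \tau^k, \qquad |\tau^k|^2 = 1,
\end{equation*}
once to obtain (\ref{2.1}) and twice to obtain (\ref{2.2})--(\ref{2.5}). Since $\lambda_k$ is simple at the diagonal base point $W$, standard perturbation theory provides a smooth branch $(\lambda_k(W'), \tau^k(W') \in \mathbb{S}^{n-1})$ for symmetric matrices $W'$ in a neighbourhood of $W$, which legitimizes every derivative below.

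\textbf{First derivatives.} Differentiating $W'\,\tau^k(W') = \lambda_k(W')\,\tau^k(W')$ in $W_{pq}$ and evaluating at $W$ (where $\tau^k = e_k$) gives
\begin{equation*}
E_{pq}\,e_k + W\,\frac{\partial \tau^k}{\partial W_{pq}} = \frac{\partial \lambda_k}{\partial W_{pq}}\,e_k + \lambda_k\,\frac{\partial \tau^k}{\partial W_{pq}},
\end{equation*}
where $E_{pq}$ is the matrix unit with a $1$ in position $(p,q)$. Taking the $e_k$-component shows $\partial \lambda_k / \partial W_{pq} = \delta_{pk}\delta_{qk}$, while taking the $e_i$-component for $i \neq k$ gives $(\lambda_k - \lambda_i)\,\partial \tau^k_{,i}/\partial W_{pq} = \delta_{ip}\delta_{qk}$, which is (\ref{2.1}) for $i \neq k$. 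The vanishing of $\partial \tau^k_{,k}/\partial W_{pq}$ follows by differentiating $|\tau^k|^2 = 1$ at $W$.

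\textbf{Second derivatives.} Differentiating the eigenpair equation twice (and using that $W'$ is linear in its entries, so the pure second derivative of $W'$ vanishes), then projecting onto $e_i$ at the base point, yields for $i \neq k$ the master identity
\begin{equation*}
(\lambda_k - \lambda_i)\,\frac{\partial^2 \tau^k_{,i}}{\partial W_{pq}\,\partial W_{rs}}\bigg|_W = \delta_{ip}\,\frac{\partial \tau^k_{,q}}{\partial W_{rs}} + \delta_{ir}\,\frac{\partial \tau^k_{,s}}{\partial W_{pq}} - \delta_{pk}\delta_{qk}\,\frac{\partial \tau^k_{,i}}{\partial W_{rs}} - \delta_{rk}\delta_{sk}\,\frac{\partial \tau^k_{,i}}{\partial W_{pq}},
\end{equation*}
in which every first derivative on the right is already known from Step 1. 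Specialising $(p,q,r,s)$ to the index patterns that appear in the statement and tracking the Kronecker factors produces (\ref{2.3}), (\ref{2.4}), and the ``otherwise'' vanishing part of (\ref{2.5}). For the diagonal component (\ref{2.2}), differentiate $|\tau^k|^2 = 1$ twice and evaluate at $W$ to get
\begin{equation*}
\frac{\partial^2 \tau^k_{,k}}{\partial W_{pq}\,\partial W_{rs}}\bigg|_W = -\sum_{i \neq k}\,\frac{\partial \tau^k_{,i}}{\partial W_{pq}}\,\frac{\partial \tau^k_{,i}}{\partial W_{rs}}\bigg|_W,
\end{equation*}
and then set $(p,q) = (r,s) = (p,k)$ with $p \neq k$; only the $i=p$ term survives and delivers $-1/(\lambda_k - \lambda_p)^2$.

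\textbf{Main obstacle.} Conceptually the argument is routine first- and second-order perturbation theory, so the real work is combinatorial bookkeeping of the $(p,q,r,s)$ cases and keeping clear when each Kronecker factor is $1$ or $0$. The three-index formula (\ref{2.4}) is the most delicate: of the four terms of the master identity, three vanish because $i, q, k$ are mutually distinct, and the lone survivor $\delta_{ip}\,\partial \tau^k_{,q}/\partial W_{rs}$ with $(p,q,r,s)=(i,q,q,k)$ produces the characteristic product $1/[(\lambda_k - \lambda_i)(\lambda_k - \lambda_q)]$ through a second application of (\ref{2.1}).
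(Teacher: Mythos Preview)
Your proposal is correct and follows essentially the same route as the paper: differentiate the eigenpair relation $W\tau^k=\lambda_k\tau^k$ and the normalization $|\tau^k|^2=1$ once and then twice, evaluate at the diagonal base point, and read off the formulas componentwise. The paper carries this out in scalar component form (writing $[W_{ii}-\lambda_k]\tau^k_{,i}+\sum_{j\ne i}W_{ij}\tau^k_{,j}=0$ and differentiating), while you phrase the same computation with matrix units $E_{pq}$ and a single ``master identity'' for the off-diagonal second derivatives; the content and the case analysis are identical.
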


\begin{proof}
From the definition of eigenvalue and eigenvector of matrix $W$, we have
\begin{align*}
(W - \lambda _k I){\tau^k} \equiv 0,
\end{align*}
where ${\tau^k} $ is the eigenvector of $W$ corresponding to the eigenvalue $\lambda_k$. That is, for $i = 1, \cdots, n$, it holds
\begin{align}\label{2.6}
  [W_{ii}  - \lambda _k ]{\tau^k} _{,i}  + \sum\limits_{j \ne i} {W_{ij} } {\tau^k} _{,j}  = 0.
 \end{align}

When $W= \{W_{ij}\}$ is diagonal and $\lambda_k$ is distinct with other eigenvalues, $\lambda_k$ and $\tau^k$ are $C^2$ at the matrix $W$. In fact,
\begin{align}\label{2.7}
{\tau^k}_{,k}=1, \quad {\tau^k}_{,i} =0, \quad i \ne k, \quad \text{ at } W.
\end{align}

Taking the first derivative of \eqref{2.6}, we have
\begin{align*}
  [\frac{{\partial W_{ii} }}
{{\partial W_{pq} }} - \frac{{\partial \lambda _k }}
{{\partial W_{pq} }}]{\tau^k} _{,i}  + [W_{ii}  - \lambda _k ]\frac{{\partial {\tau^k}_{,i} }}
{{\partial W_{pq} }} + \sum\limits_{j \ne i} {[\frac{{\partial W_{ij} }}
{{\partial W_{pq} }}{\tau^k}_{,j}  + W_{ij} \frac{{\partial {\tau^k} _{,j} }}
{{\partial W_{pq} }}]}  = 0.
 \end{align*}
Hence for $i=k$, we get from \eqref{2.7}
\begin{align}\label{2.8}
  \frac{{\partial \lambda _k }}
{{\partial W_{pq} }} = \frac{{\partial W_{kk} }}
{{\partial W_{pq} }} = \left\{ \begin{gathered}
  \begin{array}{*{20}c}
   {1,} & {p = k,q = k;}  \\
 \end{array}  \hfill \\
  \begin{array}{*{20}c}
   {0,} & {otherwise.}  \\
 \end{array}  \hfill \\
\end{gathered}  \right.
 \end{align}
And for $i \ne k$,
\begin{align*}
  [W_{ii}  - \lambda _k ]\frac{{\partial {\tau^k}_{,i} }}
{{\partial W_{pq} }} + \sum\limits_{j \ne i} {\frac{{\partial W_{ij} }}
{{\partial W_{pq} }}{\tau^k}_{,j} }  = 0,
 \end{align*}
then
\begin{align}\label{2.9}
  \frac{{\partial {\tau^k}_{,i} }}
{{\partial W_{pq} }} = \frac{1}
{{\lambda _k  - \lambda _i }}\frac{{\partial W_{ik} }}
{{\partial W_{pq} }} = \left\{ \begin{gathered}
  \begin{array}{*{20}c}
   {\frac{1}
{{\lambda _k  - \lambda _i }},} & {p = i,q = k;}  \\
 \end{array}  \hfill \\
  \begin{array}{*{20}c}
   {0,} & {\qquad otherwise.}  \\
\end{array}  \hfill \\
\end{gathered}  \right.
 \end{align}

Since $\tau^k \in \mathbb{S}^{n-1}$, we have
\begin{align} \label{2.10}
  1 = |\tau ^k |^2  = ({\tau^k}_{,1} )^2  +  \cdots  + ({\tau^k}_{,k} )^2  +  \cdots  + ({\tau^k}_{,n} )^2.
\end{align}
Taking the first derivative of \eqref{2.10}, and using \eqref{2.7}, it holds
\begin{align}\label{2.11}
  \frac{{\partial {\tau^k}_{,k} }}{{\partial W_{pq} }} = 0, \quad \forall (p,q).
\end{align}

For $i=k$, taking the second derivative of \eqref{2.6}, and using \eqref{2.7}, it holds
\begin{align*}
  [\frac{{\partial ^2 W_{kk} }}
{{\partial W_{pq} \partial W_{rs} }} - \frac{{\partial ^2 \lambda _k }}
{{\partial W_{pq} \partial W_{rs} }}]{\tau^k}_{,k}  + \sum\limits_{j \ne k} {[\frac{{\partial W_{kj} }}
{{\partial W_{pq} }}\frac{{\partial {\tau^k}_{,j} }}
{{\partial W_{rs} }} + \frac{{\partial W_{kj} }}
{{\partial W_{rs} }}\frac{{\partial {\tau^k} _{,j} }}
{{\partial W_{pq} }}]}  = 0,
 \end{align*}
hence
\begin{align} \label{2.12}
  \frac{{\partial ^2 \lambda _k }}
{{\partial W_{pq} \partial W_{rs} }} =& \sum\limits_{j \ne k} {[\frac{{\partial W_{kj} }}
{{\partial W_{pq} }}\frac{{\partial {\tau^k} _{,j} }}
{{\partial W_{rs} }} + \frac{{\partial W_{kj} }}
{{\partial W_{rs} }}\frac{{\partial {\tau^k} _{,j} }}
{{\partial W_{pq} }}]}  \notag \\
 =& \left\{ \begin{gathered}
  \begin{array}{*{20}c}
   {\frac{1}
{{\lambda _k  - \lambda _q }},} & {p = k,q \ne k,r = q,s = k;}  \\
 \end{array}  \hfill \\
  \begin{array}{*{20}c}
   {\frac{1}
{{\lambda _k  - \lambda _s }},} & {r = k,s \ne k,p = s,q = k;}  \\
 \end{array}  \hfill \\
  \begin{array}{*{20}c}
   {0,} & {\qquad otherwise.}  \\
 \end{array}  \hfill \\
\end{gathered}  \right.
 \end{align}
For $i \ne k$, it holds
\begin{align*}
 & [\frac{{\partial W_{ii} }}
{{\partial W_{pq} }} - \frac{{\partial \lambda _k }}
{{\partial W_{pq} }}]\frac{{\partial {\tau^k}_{,i} }}
{{\partial W_{rs} }} + [\frac{{\partial W_{ii} }}
{{\partial W_{rs} }} - \frac{{\partial \lambda _k }}
{{\partial W_{rs} }}]\frac{{\partial {\tau^k}_{,i} }}
{{\partial W_{pq} }} + [W_{ii}  - \lambda _k ]\frac{{\partial ^2 {\tau^k}_{,i} }}
{{\partial W_{pq} \partial W_{rs} }}  \\
&+ \sum\limits_{j \ne i} {[\frac{{\partial W_{ij} }}
{{\partial W_{pq} }}\frac{{\partial {\tau^k}_{,j} }}
{{\partial W_{rs} }} + \frac{{\partial W_{ij} }}
{{\partial W_{rs} }}\frac{{\partial {\tau^k}_{,j} }}
{{\partial W_{pq} }}]}  = 0,
 \end{align*}
then
\begin{align}
\label{2.13}&\frac{{\partial ^2 {\tau^k}_{,i} }}{{\partial W_{ik} \partial W_{ii} }} = \frac{1}
{{\lambda _k  - \lambda _i }}\frac{{\partial {\tau^k}_{,i} }}{{\partial W_{ik} }} = \frac{1}
{{\lambda _k  - \lambda _i }}\frac{1}{{\lambda _k  - \lambda _i }}, \quad i \ne k;   \\
\label{2.14}&\frac{{\partial ^2 {\tau^k}_{,i} }}{{\partial W_{iq} \partial W_{qk} }} = \frac{1}
{{\lambda _k  - \lambda _i }}\frac{{\partial W_{iq} }}
{{\partial W_{iq} }}\frac{{\partial {\tau^k}_{,q} }}
{{\partial W_{qk} }} = \frac{1}{{\lambda _k  - \lambda _i }}\frac{1}
{{\lambda _k  - \lambda _q }}, \quad i \ne k,i \ne q,q \ne k; \\
\label{2.15}&\frac{{\partial ^2 {\tau ^k} _{,i} }} {{\partial W_{ik} \partial W_{kk} }} = \frac{1}
{{\lambda _k  - \lambda _i }}[ - \frac{{\partial \lambda _k }}
{{\partial W_{kk} }}\frac{{\partial {\tau ^k} _{,i} }}{{\partial W_{ik} }}] =  - \frac{1}
{{\lambda _k  - \lambda _i }}\frac{1}{{\lambda _k  - \lambda _i }}, \quad i \ne k; \\
\label{2.16}&\frac{{\partial ^2 {\tau^k}_{,i} }}{{\partial W_{pq} \partial W_{rs} }} = 0, \quad \text{ otherwise. }
\end{align}

From \eqref{2.10}, we have
\begin{align*}
2\tau ^k _{,k} \frac{{\partial ^2 {\tau^k}_{,k} }}
{{\partial W_{pq} \partial W_{rs} }} + 2\sum\limits_{i \ne k} {\frac{{\partial {\tau^k}_{,i} }}
{{\partial W_{pq} }}\frac{{\partial {\tau^k}_{,i} }}
{{\partial W_{rs} }}}  = 0,
 \end{align*}
then
\begin{align*}
\frac{{\partial ^2 {\tau^k}_{,k} }}
{{\partial W_{pq} \partial W_{rs} }} =  - \sum\limits_{i \ne k} {\frac{{\partial {\tau^k}_{,i} }}
{{\partial W_{pq} }}\frac{{\partial {\tau^k}_{,i} }}
{{\partial W_{rs} }}}  = \left\{ \begin{gathered}
  \begin{array}{*{20}c}
   { - \frac{1}
{{\lambda _k  - \lambda _p }}\frac{1}
{{\lambda _k  - \lambda _p }},} & {p \ne k,q = k,r = p,s = q;}  \\
 \end{array}  \hfill \\
  \begin{array}{*{20}c}
   {0,} & {\qquad \qquad \quad otherwise.}  \\
 \end{array}  \hfill \\
\end{gathered}  \right.
\end{align*}
The proof of Proposition \ref{prop2.1} is finished.
\end{proof}

\begin{example}
When $n =2$, the matrix $
\left( {\begin{array}{*{20}c}
   {u_{11} } & {u_{12} }  \\
   {u_{21} } & {u_{22} }  \\

 \end{array} } \right)
$
has two eigenvalues
\[
\begin{gathered}
  \lambda _1  = \frac{{(u_{11}  + u_{22} ) + \sqrt {(u_{11}  - u_{22} )^2  + 4u_{12} u_{21} } }}
{2}, \hfill \\
  \lambda _2  = \frac{{(u_{11}  + u_{22} ) - \sqrt {(u_{11}  - u_{22} )^2  + 4u_{12} u_{21} } }}
{2}, \hfill \\
\end{gathered}
\]
with $\lambda_1 \geq \lambda_2$. If $\lambda_1 > \lambda_2$,
\[
\left[ {\left( {\begin{array}{*{20}c}
   {u_{11} } & {u_{12} }  \\
   {u_{21} } & {u_{22} }  \\

 \end{array} } \right) - \lambda _1 \left( {\begin{array}{*{20}c}
   1 & 0  \\
   0 & 1  \\

 \end{array} } \right)} \right]\left( {\begin{array}{*{20}c}
   {\xi _1 }  \\
   {\xi _2 }  \\

 \end{array} } \right) = 0,
\]
we can get
\[
\begin{gathered}
  \xi _1  = \frac{{(u_{22}  - u_{11} ) - \sqrt {(u_{11}  - u_{22} )^2  + 4u_{12} u_{21} } }}
{2}; \hfill \\
  \xi _2  =  - u_{21}. \hfill \\
\end{gathered}
\]
Then the eigenvector $\tau$ corresponding to $\lambda_1$ is
\[
\tau  =  - \frac{{(\xi _1 ,\xi _2 )}}
{{\sqrt {{\xi _1 }^2  + {\xi _2} ^2 } }}.
\]
We can verify Proposition \ref{prop2.1}.
\end{example}

\section{Proof of Theorem \ref{th1.1}}

Now we start to prove Theorem \ref{th1.1}.

Let $\tau (x) = \tau(\nabla^2 u(x)) =(\tau_1, \tau_2) \in \mathbb{S}^{1}$ be the continuous eigenvector field of $\nabla^2 u(x)$ corresponding to the largest eigenvalue. Denote
\begin{align}\label{3.2}
\Sigma =: \{x \in B_R(0): r^2 -|x|^2 + \langle x, \tau(x)\rangle^2 >0, r^2 - \langle x, \tau(x)\rangle^2 >0\},
\end{align}
where $r = \frac{1}{\sqrt 2} R$. It is easy to know, $\Sigma$ is an open set and $ B_{r}(0) \subset \Sigma \subset  B_R(0)$.
We introduce a new auxiliary function in $\Sigma$ as follows
\begin{equation} \label{3.1}
\phi(x)  = \eta(x)^\beta g(\frac{1}{2}|Du|^2 )u_{\tau \tau}
\end{equation}
where $\eta (x) = (r^2 -|x|^2 + \langle x, \tau(x)\rangle^2 )(r^2 - \langle x, \tau(x)\rangle^2)$ with $\beta = 4$ and $g(t) = e^{\frac{c_0}{r^2}t}$ with $c_0 = \frac{32}{m}$. In fact, $\langle x, \tau(x)\rangle$ is invariant under rotations of the coordinates, so is $\eta(x)$.

From the definition of $\Sigma$, we know $\eta(x) >0$ in $\Sigma$, and $\eta =0 $ on $\partial \Sigma$. Assume the maximum of $\phi(x)$ in $\Sigma$ is attains at $x_0 \in \Sigma$. By rotating the coordinates, we can assume $\nabla^2u (x_0)$ is diagonal. In the following, we denote $\lambda_i = u_{ii}(x_0)$, $\lambda = (\lambda_1, \lambda_2)$. Without loss of generality, we can assume $\lambda_1 \geq \lambda_2 $. Then $\tau(x_0) = (1, 0)$.

We will use notion $h=O(f)$ if
$|h(x)| \le Cf(x)$ for any $x \in \Omega$ with a positive constant $C$ depending only on $m$, $M$, $R$, $\sup |\nabla f|$, $\sup |\nabla^2 f|$, and $\sup |\nabla u|$. Similarly we write $h \geq O(f)$ if $h(x) \geq  - C f(x)$ and $h \leq O(f)$ if $h(x) \leq  C f(x)$.

Now, we assume $\eta \lambda_1$ is big enough. Otherwise there is nothing to prove. Then we have from the equation \eqref{1.2},
\begin{align}\label{3.3}
\lambda_2 = \frac{f (1+ |\nabla u|^2)^{2}}{\lambda_1} \leq  \frac{M (1+ |\nabla u|^2)^{2}}{\lambda_1} < \lambda_1.
\end{align}
Hence $\lambda_1 $ is distinct with the other eigenvalue, and $\tau (x)$ is $C^2$ at $x_0$. Moreover, the test function
\begin{equation} \label{3.4}
\varphi  = \beta \log \eta  + \log g(\frac{1}{2}|\nabla u|^2 ) + \log u_{11}
\end{equation}
attains the local maximum at $x_0$. In the following, all the calculations are at $x_0$. Then, we can get
\begin{equation}
0= \varphi _i  = \beta \frac{{\eta _i }}{\eta } + \frac{{g'}}{g}\sum\limits_k
{u_k u_{ki} }  + \frac{{u_{11i} }}{{u_{11} }} , \notag
\end{equation}
so we have
\begin{equation} \label{3.5}
\frac{{u_{11i} }}{{u_{11} }} =  - \beta  \frac{{\eta _i }}{\eta } -
\frac{{g'}}{g}u_i u_{ii}, \quad i =1, 2.
\end{equation}
At $x_0$, we also have
\begin{align*}
0 \geq \varphi _{ii}  =& \beta [ \frac{{\eta _{ii} }}{\eta } - \frac{{\eta _i ^2
}}{{\eta ^2 }}] + \frac{{g''g - g'^2 }}{{g^2 }}\sum\limits_k {u_k
u_{ki} } \sum\limits_l {u_l u_{li} } \notag \\
&+ \frac{{g'}}{g}\sum\limits_k {(u_{ki} u_{ki}  + u_k u_{kii} )}  +
\frac{{u_{11ii} }}{{u_{11} }} - \frac{{u_{11i} ^2 }}{{u_{11} ^2 }}
\notag \\
=& \beta  [\frac{{\eta _{ii} }}{\eta } - \frac{{\eta _i ^2 }}{{\eta ^2 }} ]+ \frac{{g'}}{g}[u_{ii} ^2  + \sum\limits_k {u_k u_{kii} } ] +
\frac{{u_{11ii} }}{{u_{11} }} - \frac{{u_{11i} ^2 }}{{u_{11} ^2 }},
\end{align*}
since $g''g - g'^2 =0$. Let
\begin{align*}
&F^{11} = \frac{\partial \det \nabla^2 u}{\partial u_{11}} = \lambda_2, \quad F^{22} = \frac{\partial \det \nabla^2 u}{\partial u_{22}} = \lambda_1, \\
&F^{12} = \frac{\partial \det \nabla^2 u}{\partial u_{12}} = 0, \quad F^{21} = \frac{\partial \det \nabla^2 u}{\partial u_{21}} = 0.
\end{align*}
Then from the equation \eqref{1.2} we can get
\begin{align}\label{3.6}
\lambda_2 = \frac{f (1+ |\nabla u|^2)^{2}}{\lambda_1}.
\end{align}
Differentiating \eqref{1.2} once, we can get
\begin{align}\label{3.7}
F^{11} u_{11i} +F^{22} u_{22i} = f_i (1+ |\nabla u|^2)^{2} + f \cdot 2(1+ |\nabla u|^2) \cdot 2 u_i u_{ii},
\end{align}
then
\begin{align}\label{3.8}
 u_{22i} =& \frac{1}{F^{22}}[f_i (1+ |\nabla u|^2)^{2} + f \cdot 2(1+ |\nabla u|^2) \cdot 2 u_i u_{ii} - F^{11} u_{11i} ] \notag\\
 =& \frac{f_i (1+ |\nabla u|^2)^{2}}{\lambda_1}+f \cdot 4(1+ |\nabla u|^2) u_i \frac{ u_{ii}}{\lambda_1} -\frac{f (1+ |\nabla u|^2)^{2}}{\lambda_1} \frac{u_{11i}}{u_{11}}\notag\\
 =& -\frac{f (1+ |\nabla u|^2)^{2}}{\lambda_1} \frac{u_{11i}}{u_{11}} + O(1).
\end{align}
Differentiating \eqref{1.2} twice, we can get
\begin{align}\label{3.9}
&F^{11} u_{1111} +F^{22} u_{2211}  \notag \\
=& \frac{\partial^2  }{ \partial x_1^2}[f (1+ |\nabla u|^2)^{2} ]- 2\frac{\partial^2 \det \nabla^2 u }{\partial u_{11} \partial u_{22}}u_{111}u_{221} - 2 \frac{\partial^2 \det \nabla^2 u }{\partial u_{12} \partial u_{21}}u_{112}^2  \notag \\
=& f_{11}(1+ |\nabla u|^2)^{2} + 2 f_1 \cdot 2(1+ |\nabla u|^2) \cdot 2 u_1 u_{11} \notag \\
&+ f[ 2 (2 u_1 u_{11})^2 +  2(1+ |\nabla u|^2) \cdot (2 u_{11}^2 + 2 u_k u_{k11})] \notag \\
&- 2u_{111}u_{221} + 2 u_{112}^2  \notag \\
=& f_{11}(1+ |\nabla u|^2)^{2} + 8 f_1 \cdot (1+ |\nabla u|^2) u_1 u_{11} +  f[8 u_1^2 +  4(1+ |\nabla u|^2)] u_{11}^2 \notag \\
&+ 4f (1+ |\nabla u|^2)[  u_1 u_{111}+ u_2 u_{211}] + 2 u_{112}^2 \notag \\
&- 2u_{111}[\frac{f_1 (1+ |\nabla u|^2)^{2}}{\lambda_1}+f \cdot 4(1+ |\nabla u|^2) u_1  -\frac{f (1+ |\nabla u|^2)^{2}}{\lambda_1} \frac{u_{111}}{u_{11}}]\notag \\
=& f_{11}(1+ |\nabla u|^2)^{2} + 8 f_1 \cdot (1+ |\nabla u|^2) u_1 u_{11} +  f[8 u_1^2 +  4(1+ |\nabla u|^2)] u_{11}^2 \notag \\
&+ 4f (1+ |\nabla u|^2)[  -u_1 u_{111}+ u_2 u_{211}] -2 f_1 (1+ |\nabla u|^2)^{2} \frac{u_{111}}{u_{11}} \notag \\
&+ 2 u_{112}^2 + 2 f(1+ |\nabla u|^2)^{2} (\frac{u_{111}}{u_{11}})^2\notag \\
=& f[8 u_1^2 +  4(1+ |\nabla u|^2)] u_{11}^2 + O(\lambda_{1}) \notag \\
&+ 4f (1+ |\nabla u|^2)[  -u_1 u_{111}+ u_2 u_{112}] -2 f_1 (1+ |\nabla u|^2)^{2} \frac{u_{111}}{u_{11}} \notag \\
&+ 2 u_{112}^2 + 2 f(1+ |\nabla u|^2)^{2} (\frac{u_{111}}{u_{11}})^2,
\end{align}
and
\begin{align}\label{3.10}
&F^{11} u_{1112} +F^{22} u_{2212} \notag \\
=& \frac{\partial^2  }{ \partial x_1 \partial x_2}[f (1+ |\nabla u|^2)^{2} ] \notag\\
&- \frac{\partial^2 \det \nabla^2 u }{\partial u_{11} \partial u_{22}}u_{111}u_{222}- \frac{\partial^2 \det \nabla^2 u }{\partial u_{22} \partial u_{11}}u_{221}u_{112}- 2\frac{\partial^2 \det \nabla^2 u }{\partial u_{12} \partial u_{21}} u_{121}u_{212}  \notag \\
=&f_{12}(1+ |\nabla u|^2)^{2} + f_1 \cdot 2(1+ |\nabla u|^2) \cdot 2 u_2 u_{22}  + f_2 \cdot 2(1+ |\nabla u|^2) \cdot 2 u_1 u_{11} \notag \\
&+ f[ 2 (2 u_1 u_{11})(2 u_2 u_{22}) +  2(1+ |\nabla u|^2) \cdot 2 u_k u_{k12}] \notag \\
& - u_{111}u_{222}- u_{112}u_{221} + 2 u_{112}u_{221}  \notag \\
=&f_{12}(1+ |\nabla u|^2)^{2} + 4(1+ |\nabla u|^2) [f_1 \cdot u_2 u_{22}  + f_2 \cdot u_1 u_{11}] \notag \\
&+ 8 f^2 u_1  u_2(1+ |\nabla u|^2)^{2} +  4 f (1+ |\nabla u|^2) u_1 u_{112} \notag \\
&+  4 f (1+ |\nabla u|^2) u_2 [  -\frac{f (1+ |\nabla u|^2)^{2}}{\lambda_1} \frac{u_{111}}{u_{11}} + O(1)] \notag \\
& - u_{111} [\frac{f_2 (1+ |\nabla u|^2)^{2}}{\lambda_1}+f \cdot 4(1+ |\nabla u|^2) u_2 \frac{ u_{22}}{\lambda_1} -\frac{f (1+ |\nabla u|^2)^{2}}{\lambda_1} \frac{u_{112}}{u_{11}}]  \notag \\
&+  u_{112}[\frac{f_1 (1+ |\nabla u|^2)^{2}}{\lambda_1}+f \cdot 4(1+ |\nabla u|^2) u_1  -\frac{f (1+ |\nabla u|^2)^{2}}{\lambda_1} \frac{u_{111}}{u_{11}}] \notag  \\
=& - \frac{u_{111}}{u_{11}} [f_2 (1+ |\nabla u|^2)^{2} + f \cdot 8(1+ |\nabla u|^2) u_2 u_{22} ]  \notag \\
&+  \frac{u_{112}}{u_{11}}[f_1 (1+ |\nabla u|^2)^{2} + f \cdot 8(1+ |\nabla u|^2) u_1 u_{11}] + O(\lambda_{1}).
\end{align}

Hence we can get by \eqref{3.7} and \eqref{3.9},
\begin{eqnarray} \label{3.11}
0 &\ge& \sum\limits_{i = 1}^2 {F^{ii} \varphi _{ii} }  \notag\\
&=& \beta \sum\limits_{i } {F^{ii} [\frac{{\eta _{ii} }}{\eta } -
\frac{{\eta _i ^2 }}{{\eta ^2 }}]}+ \frac{{g'}}{g}\sum\limits_{i } {F^{ii} u_{ii} ^2 } + \frac{{g'}}{g}\sum\limits_k {u_k \sum\limits_i F^{ii} u_{iik} }  \notag \\
&& +\frac{1}{{u_{11} }}\sum\limits_{i } {F^{ii} u_{11ii} }  - \sum\limits_{i } {F^{ii} [\frac{{u_{11i} }}{{u_{11} }}]^2 } \notag \\
&=& \beta \lambda_2 [\frac{{\eta _{11} }}{\eta } - \frac{{\eta _1 ^2 }}{{\eta ^2 }}] +\beta \lambda_1 [\frac{{\eta _{22}
}}{\eta } - \frac{{\eta _2 ^2 }}{{\eta ^2 }}] + \frac{{g'}}{g}[\lambda_1 + \lambda_2]f (1+ |\nabla u|^2)^{2}  \notag \\
&&+ \frac{{g'}}{g} [( u_1 f_1 + u_2  f_2 ) (1+ |\nabla u|^2)^{2} + 4 f \cdot (1+ |\nabla u|^2)( u_1^2 u_{11}+ u_2^2 u_{22}) ] \notag \\
&& + \frac{1}{{u_{11} }}\big\{ f[8 u_1^2 +  4(1+ |\nabla u|^2)] u_{11}^2 + O(\lambda_{1})\notag \\
&&\qquad \quad + 4f (1+ |\nabla u|^2)[  -u_1 u_{111}+ u_2 u_{211}] -2 f_1 (1+ |\nabla u|^2)^{2} \frac{u_{111}}{u_{11}} \notag \\
&&\qquad \quad  + 2 u_{112}^2 + 2 f(1+ |\nabla u|^2)^{2} (\frac{u_{111}}{u_{11}})^2\big\}  \notag \\
&&- \lambda_2 [\frac{{u_{111} }}{{u_{11} }}]^2  - \lambda_1 [\frac{{u_{112} }}{{u_{11} }}]^2 \notag \\
&\geq&  \beta [\lambda_2 \frac{{\eta _{11}}}{\eta } + \lambda_1 \frac{{\eta _{22}
}}{\eta } ] - \beta \lambda_2 \frac{{\eta _1 ^2 }}{{\eta ^2 }} - \beta \lambda_1 \frac{{\eta _2 ^2 }}{{\eta ^2 }} + \frac{{g'}}{g} f (1+ |\nabla u|^2)^{2} \lambda_1 \notag \\
&&+ \lambda_2  [\frac{{u_{111} }}{{u_{11} }}]^2  -[ 4f (1+ |\nabla u|^2)u_1 + 2 \frac{f_1 (1+ |\nabla u|^2)^{2}}{{u_{11} }} ]\frac{ u_{111}}{u_{11}} \notag \\
&&+  f[8 u_1^2 +  4(1+ |\nabla u|^2)] u_{11} \notag \\
&&+ \frac{ \lambda_1 }{2}[\frac{{u_{112} }}{{u_{11} }}]^2+ \frac{ \lambda_1 }{2}[\beta\frac{{\eta_{2} }}{{\eta }} + \frac{g'}{g} u_2 u_{22}]^2 +4f (1+ |\nabla u|^2)u_2 \frac{{u_{112} }}{{u_{11} }}+ O(1) \notag \\
&\geq&  \beta [\lambda_2\frac{{\eta _{11}}}{\eta } + \lambda_1 \frac{{\eta _{22}
}}{\eta } ] - \beta \lambda_2\frac{{\eta _1 ^2 }}{{\eta ^2 }}   + \frac{{g'}}{g} f (1+ |\nabla u|^2)^{2} \lambda_1 \notag \\
&&+ \frac{\lambda_2}{ 2} [\frac{{u_{111} }}{{u_{11} }}]^2 + \frac{ \lambda_1 }{2}[\frac{{u_{112} }}{{u_{11} }}]^2 + (\frac{ \beta^2 }{2} - \beta) \lambda_1 \frac{{\eta_{2}^2 }}{{\eta^2 }} \notag \\
&&+\beta f (1+ |\nabla u|^2)[(1+ |\nabla u|^2)\frac{g'}{g} -4 ]\frac{{\eta_{2} }}{{\eta }} u_2+ O(1),
\end{eqnarray}

where we used the following inequalities
\begin{align*}
&\lambda_2  [\frac{{u_{111} }}{{u_{11} }}]^2  -[ 4f (1+ |\nabla u|^2)u_1 + 2 \frac{f_1 (1+ |\nabla u|^2)^{2}}{{u_{11} }} ]\frac{ u_{111}}{u_{11}} +  f[8 u_1^2 +  4(1+ |\nabla u|^2)] u_{11} \\
=& \frac{\lambda_2}{2}  [\frac{{u_{111} }}{{u_{11} }}]^2 + \frac{\lambda_2}{2}\big \{\frac{ u_{111}}{u_{11}} -  \frac{1}{\lambda_2} [ 4f (1+ |\nabla u|^2)u_1 + 2 \frac{f_1 (1+ |\nabla u|^2)^{2}}{{u_{11} }} ] \big\}^2  \\
& - \frac{1}{2 \lambda_2} [ 4f (1+ |\nabla u|^2)u_1 + 2 \frac{f_1 (1+ |\nabla u|^2)^{2}}{{u_{11} }} ] ^2  +  f[8 u_1^2 +  4(1+ |\nabla u|^2)] u_{11} \\
\geq& \frac{\lambda_2}{2}  [\frac{{u_{111} }}{{u_{11} }}]^2  - \frac{\lambda_1}{2 f (1+ |\nabla u|^2)^{2}} [ 4f (1+ |\nabla u|^2)u_1 + 2 \frac{f_1 (1+ |\nabla u|^2)^{2}}{{u_{11} }} ] ^2 \\
& +  f[8 u_1^2 +  4(1+ |\nabla u|^2)] u_{11} \\
=& \frac{\lambda_2}{2}  [\frac{{u_{111} }}{{u_{11} }}]^2  - 8 f u_1^2  \lambda_1 -8 f_1 (1+ |\nabla u|^2) u_1 -   2 \frac{f_1^2 (1+ |\nabla u|^2)^{2}}{{f u_{11} }} \\
& +  f[8 u_1^2 +  4(1+ |\nabla u|^2)] u_{11} \\
\geq& \frac{\lambda_2}{2}  [\frac{{u_{111} }}{{u_{11} }}]^2 -8 f_1 (1+ |\nabla u|^2) u_1 -   2 \frac{f_1^2 (1+ |\nabla u|^2)^{2}}{{f u_{11} }},
\end{align*}
and 
\begin{align*}
4f (1+ |\nabla u|^2)u_2 \frac{{u_{112} }}{{u_{11} }} =& 4f (1+ |\nabla u|^2)u_2 [-\beta\frac{{\eta_{2} }}{{\eta }} - \frac{g'}{g} u_2 u_{22}]  \\
=& -4 \beta f (1+ |\nabla u|^2)\frac{{\eta_{2} }}{{\eta }}u_2 + O(1).
\end{align*}

Now we have the following lemma.

\begin{lemma}\label{lem3.1}
If $ \eta \lambda_1$ is big enough, we have at $x_0$
\begin{align}
\label{3.12}&\beta \lambda_2 \frac{{\eta _1 ^2 }}{{\eta ^2 }} \leq \frac{\lambda_1}{4}(\frac{u_{112}} {{u_{11}  }})^2 + O (\frac{1} {{ \eta }});   \\
\label{3.13}&\beta f (1+ |\nabla u|^2)[(1+ |\nabla u|^2)\frac{g'}{g} -4 ]\frac{{\eta_{2} }}{{\eta }} u_2  = O (\frac{1} {{ \eta }});
\end{align}
and
\begin{align}\label{3.14}
\beta [\lambda_2\frac{{\eta _{11}}}{\eta } + \lambda_1 \frac{{\eta _{22}
}}{\eta } ] \geq -  \frac{1}{2}\frac{{g'}}{g} f\lambda_1-\beta \lambda_1 [\frac{\eta_2}{\eta}]^2- \frac{\lambda_2}{2}(\frac{u_{111}}{u_{11}})^2 -\frac{\lambda_1}{4}(\frac{u_{112}} {{u_{11} }})^2+ O(\frac{{1}}{\eta} ).
\end{align}
\end{lemma}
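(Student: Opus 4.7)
\emph{Proof plan.} The idea is to compute $\eta_i$ and $\eta_{ii}$ at $x_0$ by the chain rule, unfolding the derivatives of $\tau$ through Proposition \ref{prop2.1}, then to substitute the critical-point identity \eqref{3.5} and the differentiated equations \eqref{3.8}--\eqref{3.10}, and finally to absorb the remainders by Cauchy-Schwarz. Write $\eta = AB$ with $A = r^2 - |x|^2 + \sigma^2$, $B = r^2 - \sigma^2$, $\sigma(x) = \langle x, \tau(x)\rangle$, so that $\eta_i = -2 x_i B + 2\sigma \sigma_i(B-A)$. Proposition \ref{prop2.1} at $x_0$ gives $\partial_i \tau_1 = 0$ and $\partial_i \tau_2 = u_{12i}/(\lambda_1 - \lambda_2)$; writing $x_0 = (y_1,y_2)$, one finds $\sigma_1 = 1 + y_2 u_{112}/(\lambda_1-\lambda_2)$ and $\sigma_2 = y_2 u_{122}/(\lambda_1-\lambda_2)$. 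Throughout, the assumption that $\eta\lambda_1$ is large gives, via \eqref{3.6}, $\lambda_2 \le C/\lambda_1$ and $\lambda_1 - \lambda_2 \ge \lambda_1/2$.

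Estimate \eqref{3.13} reduces to $\eta_2 = O(1)$ since its prefactor is bounded by the a priori data. The first piece $-2y_2 B$ of $\eta_2$ is clearly $O(1)$; substituting \eqref{3.8} to write $u_{122} = -(\lambda_2/\lambda_1)u_{111} + O(1)$ and \eqref{3.5} to bound $u_{111}$ in terms of $\eta_1/\eta$ reduces the second piece to $O(1)$ as well, once $\eta \lambda_1$ is large. For \eqref{3.12}, I write $\eta_1 = -2 y_1 A + Q u_{112}$ with $Q = 2 y_1 y_2 (y_2^2 - y_1^2)/(\lambda_1 - \lambda_2)$, apply $(a+b)^2 \leq 2a^2 + 2b^2$, and verify that the $A$-part contributes $8\beta \lambda_2 y_1^2 / B^2 = O(1/\eta)$ via the identity $1/B^2 = (A/B)/\eta$ together with $\lambda_2 A/B = O(1)$; the $u_{112}$-part contributes $2\beta \lambda_2 Q^2 u_{112}^2/\eta^2 = O(u_{112}^2/\lambda_1^3)$, which is at most $\frac{\lambda_1}{4}(u_{112}/u_{11})^2$ for $\lambda_1$ sufficiently large.

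Inequality \eqref{3.14} is the heart of the lemma. I would compute $\eta_{ii} = A_{ii} B + 2 A_i B_i + A B_{ii}$ with $A_{ii} = -2 + 2\sigma_i^2 + 2\sigma\sigma_{ii}$ and $B_{ii} = -2\sigma_i^2 - 2\sigma\sigma_{ii}$. The second derivative $\sigma_{ii} = 2\partial_i \tau_i + \sum_k x_k \partial_i^2 \tau_k$ unfolds via \eqref{2.2}--\eqref{2.5} into quadratic terms in $u_{pqi}/(\lambda_1 - \lambda_2)$ and linear terms in the fourth derivatives $u_{pqii}$; the latter are eliminated by the twice-differentiated equations \eqref{3.9}--\eqref{3.10}. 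After substituting \eqref{3.5} into $u_{11i}/u_{11}$ and regrouping, each surviving contribution to $\beta[\lambda_2 \eta_{11}/\eta + \lambda_1 \eta_{22}/\eta]$ is either matched with a targeted bad term on the right of \eqref{3.14} or absorbed into $O(1/\eta)$: the leading $-2\lambda_1/A$ piece coming from $A_{22} B$ is covered by $-\beta \lambda_1(\eta_2/\eta)^2$ in the regime where $A$ is small (since $(\eta_2/\eta)^2 \approx 4 y_2^2/A^2$ blows up faster than $1/A$), and by $-\frac{1}{2}(g'/g) f \lambda_1$, carved out of the positive $(g'/g) f (1+|\nabla u|^2)^2 \lambda_1$ already present in \eqref{3.11}, in the regime where $A$ is bounded below; the quadratic pieces $-(\lambda_2/2)(u_{111}/u_{11})^2$ and $-(\lambda_1/4)(u_{112}/u_{11})^2$ absorb the $(u_{11k})^2/(\lambda_1 - \lambda_2)^2$-type terms, using $\lambda_2 \le C/\lambda_1$ and $1/(\lambda_1 - \lambda_2)^2 \le 4/\lambda_1^2$.

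The main obstacle is the combinatorial bookkeeping in \eqref{3.14}: $\sigma_{ii}$ is a double chain rule whose products with $\sigma, \sigma_i, A, B$ proliferate into many cross terms, and the coefficients of $(u_{111}/u_{11})^2$ and $(u_{112}/u_{11})^2$ must be shown to fit the narrow budgets $\lambda_2/2$ and $\lambda_1/4$ left on the right of the inequality. This succeeds because the specific choices $\beta = 4$ and $c_0 = 32/m$ were tuned so that the leading coefficients have exactly the needed slack once $\eta\lambda_1$ is large.
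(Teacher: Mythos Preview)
Your plan is correct and follows the same route as the paper: expand $\eta_i$, $\eta_{ii}$ through the chain rule and Proposition~\ref{prop2.1}, feed in \eqref{3.5} and the differentiated equations \eqref{3.8}--\eqref{3.10}, and close with Cauchy--Schwarz plus the dichotomy on the size of $A=r^2-y_2^2$ that you describe for the leading $-2\beta\lambda_1/A$ term. The one place your sketch is looser than the paper is \eqref{3.13}: writing $u_{221}$ in terms of $u_{111}/u_{11}$ via \eqref{3.8} and then $u_{111}/u_{11}$ in terms of $\eta_1/\eta$ via \eqref{3.5} does not yet give $O(1)$, because $\eta_1$ itself contains $u_{112}$, which by \eqref{3.5} feeds back into $\eta_2/\eta$; the paper resolves this circularity explicitly by substituting the two first-derivative formulas into each other and solving for $\eta_2/\eta$ (the computation producing \eqref{3.19}), after which ``$\eta\lambda_1$ large'' is indeed enough since the coupling coefficient is $O((\eta\lambda_1)^{-2})$.
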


\begin{proof}
At $x_0$, $\tau=(\tau_1, \tau_2)= (1, 0)$.
Then from Proposition \ref{prop2.1} we get
\begin{align}\label{3.15}
\left\langle {x,\partial _i \tau } \right\rangle  =&  \sum\limits_{m = 1}^2 {x_m \frac{{\partial \tau _m }}
{{\partial x_i }}}  = \sum\limits_{m = 1}^2 {x_m \frac{{\partial \tau _m }}
{{\partial u_{pq} }}u_{pqi}} = x_2 \frac{{\partial \tau _2 }}
{{\partial u_{pq} }}u_{pqi}   \notag \\
=& x_2 \frac{u_{12i}} {{\lambda_1 - \lambda_2 }}, \quad i = 1, 2.
\end{align}

From the definition of $\eta$, then we have at $x_0$
\begin{align}\label{3.16}
\eta  = [r^2  - |x|^2  + \left\langle {x,\tau } \right\rangle ^2 ][r^2  - \left\langle {x,\tau } \right\rangle ^2 ] =  (r^2  - x_2 ^2)(r^2  - x_1 ^2).
\end{align}
Taking the first derivative of $\eta$, we can get
\begin{align*}
\eta _i =& [ - 2x_i  + 2\left\langle {x,\tau } \right\rangle \left\langle {x,\tau } \right\rangle _i ][r^2  - \left\langle {x,\tau } \right\rangle ^2 ]\\
   &+ [r^2  - |x|^2  + \left\langle {x,\tau } \right\rangle ^2 ][ - 2\left\langle {x,\tau } \right\rangle \left\langle {x,\tau } \right\rangle _i ] \\
   =& [ - 2x_i  + 2x_1 ( \delta_{i1}+ \left\langle {x, \partial_i \tau } \right\rangle ) ][r^2  - x_1 ^2 ] +( r^2 -x_2^2) [ - 2x_1  ( \delta_{i1} + \left\langle {x, \partial_i \tau } \right\rangle )  ] \hfill \\
   =& \left\{ \begin{array}{l}
  - 2x_1(r^2  - x_2 ^2 )+  2x_1 \left\langle {x, \partial_1 \tau } \right\rangle (x_2^2 - x_1 ^2), \quad i =1;\\
  - 2x_2(r^2  - x_1 ^2 )+  2x_1 \left\langle {x, \partial_2 \tau } \right\rangle (x_2^2 - x_1 ^2), \quad i =2.\\
 \end{array} \right.
\end{align*}
Hence if $ \eta \lambda_1$ is big enough, we can get
\begin{align} \label{3.17}
\beta \lambda_2\frac{{\eta _1 ^2 }}{{\eta ^2 }} =&  \beta \lambda_2 [\frac{- 2x_1(r^2  - x_2^2)}{\eta} + 2x_1 x_2 \frac{x_2^2 - x_1 ^2}{\eta} \frac{u_{112}} {{\lambda_1 - \lambda_2 }}]^2  \notag \\
\leq& \beta \lambda_2 [\frac{8r^6 }{\eta^2} + \frac{8r^8 }{\eta^2} (\frac{u_{112}} {{\lambda_1 - \lambda_2  }})^2]\notag \\
\leq& \frac{\lambda_1}{4}(\frac{u_{112}} {{u_{11}  }})^2 + O (\frac{1} {{ \eta }}).
\end{align}
Also we have
\begin{align}
\frac{{\eta _2 }}{{\eta }} =&\frac{- 2x_2(r^2  - x_1^2)}{\eta} + 2x_1 x_2 \frac{x_2^2 - x_1 ^2}{\eta} \frac{u_{221}} {{\lambda_1 - \lambda_2 }} \notag \\
=&\frac{- 2x_2(r^2  - x_1^2)}{\eta} + 2x_1 x_2 \frac{x_2^2 - x_1 ^2}{\eta} \frac{1} {{\lambda_1 - \lambda_2 }} [  -\frac{f (1+ |\nabla u|^2)^{2}}{\lambda_1} \frac{u_{111}}{u_{11}} + O(1)]\notag \\
=&\frac{- 2x_2(r^2  - x_1^2)}{\eta} [1- x_1 \frac{(x_2^2 - x_1 ^2)(r^2 - x_2^2)}{\eta} \frac{1} {{\lambda_1 - \lambda_2 }} \cdot O(1) ] \notag \\
&+ 2x_1 x_2 \frac{x_2^2 - x_1 ^2}{\eta} \frac{1} {{\lambda_1 - \lambda_2 }} \frac{f (1+ |\nabla u|^2)^{2}}{\lambda_1}[ \beta \frac{{\eta _1 }}{{\eta }} + \frac{{g'}}{g}u_1 u_{11}]\notag \\
=&\frac{- 2x_2(r^2  - x_1^2)}{\eta} [1 + O (\frac{1} {{\eta \lambda_1 }})- x_1 \frac{(x_2^2 - x_1 ^2)(r^2 - x_2^2)}{\eta} \frac{1} {{\lambda_1 - \lambda_2 }} f (1+ |\nabla u|^2)^{2}\frac{{g'}}{g}u_1] \notag \\
&+ 2x_1 x_2 \frac{x_2^2 - x_1 ^2}{\eta} \frac{1} {{\lambda_1 - \lambda_2 }} \frac{f(1+ |\nabla u|^2)^{2}}{\lambda_1} \beta [\frac{- 2x_1(r^2  - x_2^2)}{\eta} + 2x_1 x_2 \frac{x_2^2 - x_1 ^2}{\eta} \frac{u_{112}} {{\lambda_1 - \lambda_2 }}]\notag \\
=&\frac{- 2x_2(r^2  - x_1^2)}{\eta} [1 + O (\frac{1} {{\eta \lambda_1 }})] \notag \\
&+[ 2x_1 x_2 \frac{x_2^2 - x_1 ^2}{\eta} ]^2 \frac{f(1+ |\nabla u|^2)^{2}} {{(\lambda_1 - \lambda_2 )^2}} \beta [ - \beta \frac{ \eta_{2}}{ \eta} - \frac{{g'}}{g}u_2 u_{22}], \notag
\end{align}
then we can get
\begin{align}\label{3.18}
&[1+ \beta ^2 ( 2x_1 x_2 \frac{x_2^2 - x_1 ^2}{\eta} )^2 \frac{f(1+ |\nabla u|^2)^{2}} {{(\lambda_1 - \lambda_2 )^2}} ]\frac{{\eta _2 }}{{\eta }} \notag \\
=& \frac{- 2x_2(r^2  - x_1^2)}{\eta} [1+ O (\frac{1} {{\eta \lambda_1 }})]  -[ 2x_1 x_2 \frac{x_2^2 - x_1 ^2}{\eta} ]^2 \frac{f(1+ |\nabla u|^2)^{2}} {{(\lambda_1 - \lambda_2 )^2}} \beta \frac{{g'}}{g}u_2 u_{22}  \notag  \\
=& \frac{- 2x_2(r^2  - x_1^2)}{\eta} [1+ O (\frac{1} {{\eta \lambda_1 }})+ 2x_1^2 x_2 \frac{(x_2^2 - x_1 ^2)^2}{\eta^2} \frac{(r^2  - x_2^2)}{\eta} \frac{f(1+ |\nabla u|^2)^{2}} {{(\lambda_1 - \lambda_2 )^2}} \beta \frac{{g'}}{g}u_2 u_{22}]  \notag \\
=& \frac{- 2x_2(r^2  - x_1^2)}{\eta} [1+ O (\frac{1} {{\eta \lambda_1 }})],
\end{align}
which implies
\begin{align}\label{3.19}
\frac{{\eta _2 }}{{\eta }}= \frac{- 2x_2(r^2  - x_1^2)}{\eta} [1+ O (\frac{1} {{\eta \lambda_1 }})].
\end{align}
That is $ \frac{{\eta _2 }}{{\eta  }} \approx \frac{- 2x_2(r^2  - x_1^2)}{\eta}$ if $\eta \lambda_1$ is big enough.
Hence
\begin{align}\label{3.20}
\beta f (1+ |\nabla u|^2)[(1+ |\nabla u|^2)\frac{g'}{g} -4 ]\frac{{\eta_{2} }}{{\eta }} u_2  = O (\frac{1} {{ \eta }}).
\end{align}

Taking second derivatives of $\eta$, we can get
\begin{align*}
\eta _{ii}  =& [ - 2 + 2\left\langle {x,\tau } \right\rangle \left\langle {x,\tau } \right\rangle _{ii}  + 2\left\langle {x,\tau } \right\rangle _i \left\langle {x,\tau } \right\rangle _i ][r^2  - \left\langle {x,\tau } \right\rangle ^2 ]  \\
&+ 2[ - 2x_i  + 2\left\langle {x,\tau } \right\rangle \left\langle {x,\tau } \right\rangle _i ][ - 2\left\langle {x,\tau } \right\rangle \left\langle {x,\tau } \right\rangle _i ]  \\
&+ [r^2  - |x|^2  + \left\langle {x,\tau } \right\rangle ^2 ][ - 2\left\langle {x,\tau } \right\rangle \left\langle {x,\tau } \right\rangle _{ii}  - 2\left\langle {x,\tau } \right\rangle _i \left\langle {x,\tau } \right\rangle _i ]  \\
=& [ - 2 + 2x_1 \left\langle {x,\tau } \right\rangle _{ii}  + 2(\delta_{i1} +  \left\langle {x, \partial_i \tau } \right\rangle)^2 ][r^2  - x_1 ^2 ]  \\
&+ 2[ - 2x_i  + 2x_1 (\delta_{i1} +  \left\langle {x, \partial_i \tau } \right\rangle) ][ - 2x_1 (\delta_{i1} +  \left\langle {x, \partial_i \tau } \right\rangle) ] \\
&+ (r^2 -x_2^2)[ - 2x_1 \left\langle {x,\tau } \right\rangle _{ii}  - 2(\delta_{i1} +  \left\langle {x, \partial_i \tau } \right\rangle)^2 ],
\end{align*}
so
\begin{align}
\label{3.21}\eta _{11}  =& - 2(r^2-x_2^2) - 2x_1 (x_1^2 -x_2^2 )\left\langle {x,\tau } \right\rangle _{11} \notag \\
&+ ( 4x_2^2 -12 x_1^2 )\left\langle {x, \partial_1 \tau } \right\rangle + ( 2x_2^2 -10 x_1^2 )  \left\langle {x, \partial_1 \tau } \right\rangle^2,  \\
\label{3.22}\eta _{22}  =& - 2(r^2  - x_1 ^2 ) - 2x_1 (x_1^2 -x_2^2 )\left\langle {x,\tau } \right\rangle _{22} \notag \\
&+ 8 x_1 x_2 \left\langle {x, \partial_2 \tau } \right\rangle + ( 2x_2^2 -10 x_1^2 )  \left\langle {x, \partial_2 \tau } \right\rangle^2.
\end{align}
Hence
\begin{align}\label{3.23}
\beta [\lambda_2 \frac{{\eta _{11}}}{\eta } + \lambda_1 \frac{{\eta _{22}
}}{\eta } ] =& -2\beta [\lambda_2 \frac{{r^2-x_2^2}}{\eta } + \lambda_1 \frac{{r^2-x_1^2}}{\eta } ]  \notag\\
&-2\beta \frac{{x_1 (x_1^2 -x_2^2 )}}{\eta } [\lambda_2 \left\langle {x,\tau } \right\rangle _{11} + \lambda_1 \left\langle {x,\tau } \right\rangle _{22}]  \notag \\
&+\beta \lambda_2  [ \frac{{x_2 ( 4x_2^2 -12 x_1^2 ) }}{\eta }\frac{u_{112}} {{\lambda_1 - \lambda_2 }}+\frac{{x_2^2 ( 2x_2^2 -10 x_1^2 )}}{\eta }(\frac{u_{112}} {{\lambda_1 - \lambda_2 }})^2 ] \\
& +\beta \lambda_1 [ \frac{{8x_1 x_2^2 }}{\eta } \frac{u_{221}} {{\lambda_1 - \lambda_2 }} + \frac{{ x_2^2( 2x_2^2 -10 x_1^2 ) }}{\eta }(\frac{u_{221}} {{\lambda_1 - \lambda_2 }})^2  ]. \notag
\end{align}
Direct calculations yield
\begin{align*}
 \left\langle {x,\tau } \right\rangle _{11}  =& \frac{{\partial ^2 }}
{{\partial x_1 ^2 }}[\sum\limits_{m = 1}^2 {x_m \tau _m } ] = 2\frac{{\partial \tau _1 }}
{{\partial x_1 }} + \sum\limits_{m = 1}^2 {x_m \frac{{\partial ^2 \tau _m }}
{{\partial x_1 ^2 }}}  \\
=& 2\frac{{\partial \tau _1 }}
{{\partial u_{pq} }}u_{pq1}  + \sum\limits_{m = 1}^2 {x_m [\frac{{\partial \tau _m }}
{{\partial u_{pq} }}u_{pq11}  + \frac{{\partial ^2 \tau _m }}
{{\partial u_{pq} \partial u_{rs} }}u_{pq1} u_{rs1} ]}  \\
=& 0+x_1 \frac{{\partial ^2 \tau _1 }}{{\partial u_{pq} \partial u_{rs} }}u_{pq1} u_{rs1} +x_2 [\frac{{\partial \tau _2 }}
{{\partial u_{pq} }}u_{pq11}  + \frac{{\partial ^2 \tau _2 }}
{{\partial u_{pq} \partial u_{rs} }}u_{pq1} u_{rs1} ] \\
=&  - x_1\left[ {\frac{{u_{112} }} {{\lambda _1  - \lambda _2 }}} \right] ^2 + x_2 \left[\frac{{ 1}} {{\lambda _1  - \lambda _2 }} \right] u_{1211}+ 2x_2 \left[-\frac{{ u_{112} u_{111} }} {{(\lambda _1  - \lambda _2)^2 }} +\frac{{ u_{112} u_{221} }} {{(\lambda _1  - \lambda _2)^2 }} \right],
\end{align*}
Similarly, we have
\begin{align*}
\left\langle {x,\tau } \right\rangle _{22}  =& \frac{{\partial ^2 }}
{{\partial x_2 ^2 }}[\sum\limits_{m = 1}^2 {x_m \tau _m } ] = 2\frac{{\partial \tau _2 }}
{{\partial x_2 }} + \sum\limits_{m = 1}^2 {x_m \frac{{\partial ^2 \tau _m }}
{{\partial x_2 ^2 }}}  \\
=& 2\frac{{\partial \tau _2 }}
{{\partial u_{pq} }}u_{pq2}  + \sum\limits_{m = 1}^2 {x_m [\frac{{\partial \tau _m }}
{{\partial u_{pq} }}u_{pq22}  + \frac{{\partial ^2 \tau _m }}
{{\partial u_{pq} \partial u_{rs} }}u_{pq2} u_{rs2} ]}  \\
=& 2\left[  \frac{{1 }}
{{\lambda _1  - \lambda _2}} \right]u_{221}  - x_1\left[ {\frac{{u_{221} }} {{\lambda _1  - \lambda _2 }}} \right] ^2 \\
 &+ x_2 \left[\frac{{ 1}} {{\lambda _1  - \lambda _2 }} \right] u_{1222}+ 2x_2 \left[-\frac{{ u_{112} u_{221} }} {{(\lambda _1  - \lambda _2)^2 }} +\frac{{ u_{222} u_{221} }} {{(\lambda _1  - \lambda _2)^2 }} \right],
\end{align*}
then
\begin{align}\label{3.24}
&\lambda_2 \left\langle {x,\tau } \right\rangle _{11} + \lambda_1 \left\langle {x,\tau } \right\rangle _{22}\notag \\
=&  - x_1 \lambda_2 \left[ {\frac{{u_{112} }} {{\lambda _1  - \lambda _2 }}} \right] ^2 + 2\lambda_1\left[  \frac{{ u_{221}}}
{{\lambda _1  - \lambda _2}} \right] - x_1 \lambda_1\left[ {\frac{{u_{221} }} {{\lambda _1  - \lambda _2 }}} \right] ^2 \notag \\
&+ x_2 \left[\frac{{ 1}} {{\lambda _1  - \lambda _2 }} \right] \left[ - \frac{u_{111}}{u_{11}} [f_2 (1+ |\nabla u|^2)^{2} + f \cdot 8(1+ |Du|^2) u_2 u_{22} ] \right. \notag \\
&\qquad \qquad \qquad \qquad \left.+  \frac{u_{112}}{u_{11}}[f_1 (1+ |\nabla u|^2)^{2} + f \cdot 8(1+ |\nabla u|^2) u_1 u_{11}] + O(\lambda_{1})\right] \notag \\
&- 2x_2\frac{{ u_{112} }} {{(\lambda _1  - \lambda _2)^2 }}\left[f_1 (1+ |\nabla u|^2)^{2} + f \cdot 2(1+ |\nabla u|^2) \cdot 2 u_1 u_{11} \right] \notag \\
&+2x_2\frac{{ u_{221} }} {{(\lambda _1  - \lambda _2)^2 }}\left[f_2 (1+ |\nabla u|^2)^{2} + f \cdot 2(1+ |\nabla u|^2) \cdot 2 u_2 u_{22} \right] \notag \\
=&  \left[ {\frac{{u_{112} }} {{\lambda _1  - \lambda _2 }}} \right] ^2 \cdot O(\frac{1}{\lambda_1}) + \left[  \frac{{ u_{112}}}
{{\lambda _1  - \lambda _2}} \right]\cdot O(\frac{1}{\lambda_1}) + O(1) \notag \\
&+\left[ {\frac{{u_{221} }} {{\lambda _1  - \lambda _2 }}} \right] ^2 \cdot O(\lambda_1)+\left[ {\frac{{u_{221} }} {{\lambda _1  - \lambda _2 }}} \right]\cdot O(\lambda_1) + \frac{u_{111}}{u_{11}} \cdot O(\frac{1}{\lambda_1}).
\end{align}

From \eqref{3.23} and \eqref{3.24}, we can get
\begin{align}\label{3.25}
&\beta [\lambda _2\frac{{\eta _{11}}}{\eta } + \lambda_1 \frac{{\eta _{22} }}{\eta } ] \notag\\
=&O(\frac{{1}}{\eta\lambda _1 } ) -2\beta \lambda_1 \frac{{r^2-x_1^2}}{\eta } + O(\frac{{1}}{\eta} )\notag \\
&+(\frac{u_{112}} {{\lambda_1 - \lambda_2 }})^2  \cdot O(\frac{{1}}{\eta\lambda _1 } ) +(\frac{u_{112}} {{\lambda_1 - \lambda_2 }})  \cdot O(\frac{{1}}{\eta\lambda _1 } )  + \frac{u_{111}}{u_{11}} \cdot O(\frac{1}{\eta \lambda_1})\notag\\
& +(\frac{u_{221}} {{\lambda_1 - \lambda_2 }})^2 \cdot O(\frac{{\lambda_1}}{\eta })+\frac{u_{221}} {{\lambda_1 - \lambda_2 }}  \cdot O(\frac{{\lambda_1}}{\eta }) \notag\\
=& -2\beta \lambda_1 \frac{{r^2-x_1^2}}{\eta } + O(\frac{{1}}{\eta} )+(\frac{u_{112}} {{\lambda_1 - \lambda_2 }})^2  \cdot O(\frac{{1}}{\eta\lambda _1 } ) +(\frac{u_{112}} {{\lambda_1 - \lambda_2 }})  \cdot O(\frac{{1}}{\eta\lambda _1 } )\notag \\
&  + \frac{u_{111}}{u_{11}} \cdot O(\frac{1}{\eta \lambda_1})+[ -\frac{f (1+ |\nabla u|^2)^{2}}{\lambda_1} \frac{u_{111}}{u_{11}} + O(1)]^2 \cdot O(\frac{{1}}{\eta \lambda_1})\notag\\
&+[ -\frac{f (1+ |\nabla u|^2)^{2}}{\lambda_1} \frac{u_{111}}{u_{11}} + O(1)]  \cdot O(\frac{{1}}{\eta }) \notag\\
\geq& -2\beta \lambda_1 \frac{{r^2-x_1^2}}{\eta } + O(\frac{{1}}{\eta} )- \frac{\lambda_2}{2}(\frac{u_{111}}{u_{11}})^2 -\frac{\lambda_1}{4}(\frac{u_{112}} {{u_{11} }})^2.
\end{align}

Now we just need to estimate $-2\beta \lambda_1 \frac{{r^2-x_1^2}}{\eta }$. If $x_2^2 \leq \frac{r^2}{2}$, we can get
\begin{align*}
-2\beta \lambda_1 \frac{{r^2-x_1^2}}{\eta }= - \frac{{8}}{r^2-x_2^2 } \lambda_1 \geq - \frac{{16}}{r^2} \lambda_1 \geq - \frac{1}{2}\frac{{c_0}}{r^2} f\lambda_1 = -  \frac{1}{2}\frac{{g'}}{g} f\lambda_1.
\end{align*}
If $x_2^2 > \frac{r^2}{2}$, we can get
\begin{align*}
-2\beta \lambda_1 \frac{{r^2-x_1^2}}{\eta }=& - \frac{{8}}{r^2-x_2^2 } \lambda_1 \geq -\frac{{x_2^2}}{r^2-x_2^2 } \frac{{8}}{r^2-x_2^2 } \lambda_1 = -\beta \lambda_1 \frac{1}{2}[\frac{{2x_2}}{r^2-x_2^2 }]^2  \\
\geq& -\beta \lambda_1 [\frac{\eta_2}{\eta}]^2,
\end{align*}
if $\eta \lambda_1$ is big enough. Hence
\begin{align}\label{3.26}
-2\beta \lambda_1 \frac{{r^2-x_1^2}}{\eta }\geq  -  \frac{1}{2}\frac{{g'}}{g} f\lambda_1-\beta \lambda_1 [\frac{\eta_2}{\eta}]^2,
\end{align}
and
\begin{align}\label{3.27}
\beta [\lambda_2\frac{{\eta _{11}}}{\eta } + \lambda_1 \frac{{\eta _{22}
}}{\eta } ] \geq -  \frac{1}{2}\frac{{g'}}{g} f\lambda_1-\beta \lambda_1 [\frac{\eta_2}{\eta}]^2- \frac{\lambda_2}{2}(\frac{u_{111}}{u_{11}})^2 -\frac{\lambda_1}{4}(\frac{u_{112}} {{u_{11} }})^2+ O(\frac{{1}}{\eta} ).
\end{align}

\end{proof}

Now we continue to prove Theorem \ref{th1.1}.  From \eqref{3.11} and Lemma \ref{lem3.1}, we can get
\begin{align}
0 \ge \sum\limits_{i = 1}^2 {F^{ii} \varphi _{ii} }  \geq&  \frac{1}{2}\frac{{g'}}{g} f \lambda_1  +O (\frac{{1}}{\eta}) +O(1) \notag \\
=& \frac{1}{2}\frac{{g'}}{g} f \lambda_1  - \frac{{C_0}}{\eta} -C_0,
\end{align}
So we can get
\begin{align} \label{3.29}
\eta \lambda_1 \leq&   C_1.
\end{align}
where $C_0$ and $C_1$ are positive constants depending only on $m$, $M$, $R$, $\sup |\nabla f|$, $\sup |\nabla^2 f|$, and $\sup |\nabla u|$. So we can easily get
\begin{align*}
u_{\tau(0) \tau(0)}(0) \leq& \frac{1}{r^{4 \beta}} \phi (0)\leq \frac{1}{r^{4\beta}} \phi (x_0)  \leq  C,
\end{align*}
and
\begin{align} \label{3.30}
|u_{\xi \xi} (0) |\leq u_{\tau(0) \tau(0)}(0) \leq  C, \quad \forall ~\xi \in \mathbb{S}^{1}.
\end{align}
Then we have proved \eqref{1.3}, and Theorem \ref{th1.1} holds.

\textbf{Acknowledgement}.
The authors would like to express sincere gratitude to Prof. Xi-Nan Ma for the constant encouragement in this subject.

\end{document}